\DeclareMathAlphabet{\eusm}{OT1}{eusm}{m}{n}
\newtheorem{theorem}{Theorem}[section]
\newtheorem{prop}[theorem]{Proposition}
\newtheorem{cor}[theorem]{Corollary}
\newtheorem{lem}[theorem]{Lemma}
\newtheorem{exam}[theorem]{Example}
\newtheorem{remark}[theorem]{Remark}
\def\Im{\mbox{Im\/}}
\def\End{\mbox{End\/}}
\def\H{\mbox{Hom\/}}
\def\M{\mbox{Mod\/}}
\begin{document}
\title{The Schr\"{o}der-Bernstein problem for Modules}
\subjclass[2010]{16D40, 16D80.}
\keywords{automorphism-invariant modules, endomorphism-invariant modules, envelopes}
\author{Pedro A. Guil Asensio}
\address{Departamento de Mathematicas, Universidad de Murcia, Murcia, 30100, Spain}
\email{paguil@um.es}
\author{Berke Kalebo\~{g}az}
\address{Department of Mathematics, Hacettepe University, Ankara, 06800, Turkey}
\email{bkuru@hacettepe.edu.tr}
\author{Ashish K. Srivastava}
\address{Department of Mathematics and Statistics, St. Louis University, St.
Louis, MO-63103, USA} \email{asrivas3@slu.edu}

\maketitle

\begin{abstract}
In this paper we study the Schr\"{o}der-Bernstein problem for modules. We obtain a positive solution for
the Schr\"{o}der-Bernstein problem for modules invariant under
endomorphisms of their general envelopes under some mild conditions that are always satisfied, for example, in the case of injective,  pure-injective or cotorsion
envelopes. In the particular cases of injective envelopes and pure-injective envelopes, we are able to extend
it further and we show that the Schr\"{o}der-Bernstein problem has a
positive solution even for modules that are invariant only under
automorphisms of their injective envelopes or pure-injective envelopes.
\end{abstract}

\bigskip

\bigskip

\section{Introduction}

\noindent The Schr\"{o}der-Bernstein theorem is a classical result in
basic set theory. It states that if $A$ and $B$ are two sets such
that there is a one-to-one function from $A$ into $B$ and a one-to-one function from $B$ into $A$, then there exists a
bijective map between the two sets $A$ and $B$. This type of
problem where one asks if two mathematical objects $A$ and $B$
which are similar in some sense to a part of each other are also
similar themselves is usually called the Schr\"{o}der-Bernstein problem and it has been studied in various branches of Mathematics. The most
notable result along this direction is the one due to W. T. Gowers \cite{Gowers} where he constructed an example of two non-isomorphic Banach spaces such that each one is a complemented subspace of the other, thus showing that the Schr\"{o}der-Bernstein problem has a negative solution for Banach spaces. In the context of modules, this problem was studied by Bumby in \cite{Bumby} where he proved that
the Schr\"{o}der-Bernstein problem has a positive solution for modules which are invariant under endomorphisms of their injective envelope. 

The study of modules which are invariant under endomorphisms of their
injective envelope goes back to the pioneering work of
Johnson and Wong \cite{JW}. In order to prove that the Schr\"{o}der-Bernstein problem has a positive solution for modules which are invariant under endomorphisms of their injective envelope, Bumby first showed that if $M$ and $N$ are two modules such
that there is a monomorphism from $M$ to $N$ and a monomorphism
from $N$ to $M$, then their injective envelopes are isomorphic, that is, $E(M) \cong E(N)$. As a consequence,
he deduced that if $M$ and $N$ are two modules invariant under endomorphisms of their injective envelopes such that there is a monomorphism from $M$ to $N$ and a
monomorphism from $N$ to $M$, then $M \cong N$. 

Dickson and Fuller in \cite{DF} initiated the study of modules which are
invariant under all automorphisms of their injective envelope. Inspired by this, modules invariant under endomorphisms or, in particular,
automorphisms, of their general envelopes were recently introduced in
\cite{GKS}. The objective of this paper is to extend the result of Bumby for general envelopes and obtain Schr\"{o}der-Bernstein type results for modules invariant under endomorphisms or automorphisms of their envelopes.

Let $\mathcal X$ be a class of right $R$-modules
closed under isomorphisms and direct summands. An $\mathcal X$-{\em preenvelope} of a
right module $M$ is a homomorphism $u: M\rightarrow X$ with $X\in
\mathcal{X}$ such that any other homomorphism $g: M\rightarrow
X^\prime$ with $X^\prime \in \mathcal{X}$ factors through $u$. A
preenvelope $u: M\rightarrow X$ is called an $\mathcal{X}$-{\it
envelope} if it is minimal in the sense that any endomorphism
$h:X\rightarrow X$ such that $h \circ u=u$ must be an
automorphism. An $\mathcal{X}$-(pre)envelope $u: M\rightarrow X$
is called {\it monomorphic} if $u$ is a monomorphism. A class
$\mathcal{X}$ of right modules over a ring $R$, closed under
isomorphisms and direct summands, is called an {\em enveloping class} if any right
$R$-module $M$ has an $\mathcal X$-envelope. A module $M$ having a
monomorphic $\mathcal X$-envelope $u:M\rightarrow X(M)$ is said to
be {\em $\mathcal X$-automorphism invariant} (resp., {\em
$\mathcal X$-endomorphism invariant}) if for any automorphism
(resp., endomorphism) $\varphi:X(M)\rightarrow X(M)$, there exists
an endomorphism $f:M\rightarrow M$ such that $u\circ f=\varphi
\circ u$. It may be observed that when $\varphi$ is an automorphism, then $f$ also turns out to be an automorphism (see \cite{GKS}).

When $\mathcal X$ is the class of all injective modules, $\mathcal
X$-automorphism invariant modules are usually just called {\em
automorphism-invariant} modules and $\mathcal X$-endomorphism
invariant modules are called {\em quasi-injective} modules. When
$\mathcal X$ is the class of pure-injective modules, $\mathcal
X$-automorphism invariant modules are usually just called {\em
pure-automorphism-invariant} modules and $\mathcal X$-endomorphism
invariant modules are called {\em pure-quasi-injective} modules.

Let $\mathcal X$ be an enveloping class and $M, N$, two $\mathcal
X$-endomorphism invariant modules with monomorphic $\mathcal
X$-envelopes. Assume that $N$ is $\mathcal X$-strongly purely
closed and $M$ is an $\mathcal X$-strongly pure submodule of $N$ (see Section 2 for the definition of these concepts).
In this paper we show that if there exists an $\mathcal
X$-strongly pure monomorphism $u:N\rightarrow M$, then $M\cong N$.
In particular, this shows that the Schr\"{o}der-Bernstein property
holds for modules invariant under endomorphisms of their injective
or pure-injective envelopes or for flat modules invariant under
endomorphisms of their cotorsion envelopes. In the last section of
this paper, we extend this result further for the particular cases
of injective envelopes and pure-injective envelopes. We show that a
Schr\"{o}der-Bernstein type result holds for modules that are
invariant only under automorphisms of their injective envelopes or
pure-injective envelopes.

Throughout this paper, all rings will be associative with a unit element, unless stated otherwise. By an $R$-module, we will always mean a unitary right module over a ring $R$. And we will denote by $\M$-$R$, the category of right $R$-modules. We refer to \cite{Faith, lam2, W} for any undefined notion used along this paper.  

\bigskip

\section{The Schr\"{o}der-Bernstein problem for $\mathcal X$-endomorphism invariant modules}

\noindent Let $\mathcal X$ be an enveloping class of
right $R$-modules. We will assume along this paper that  every right $R$-module $M$ has
a monomorphic $\mathcal X$-envelope that we are going to denote by
$v_M:M\rightarrow X(M)$.

\noindent Following the notation in \cite[page 14]{GH1}, we are going to say
that a homomorphism $u:N\rightarrow M$ of right $R$-modules is an
{\em $\mathcal X$-strongly pure monomorphism} if any
 homomorphism $f:N\rightarrow X$, with
$X\in\mathcal X$, extends to a homomorphism $g:M\rightarrow X$
such that $g\circ u=f$. Let us note that $\mathcal X$-strongly
pure monomorphisms are clearly closed under composition. Moreover,
if $X\in \mathcal X$, then any $\mathcal X$-strongly pure
monomorphism $u:X\rightarrow M$ splits.

The following characterization of $\mathcal X$-strongly pure
monomorphisms is straightforward and we state it without any
proof.

\begin{lem}
Let $u:N\rightarrow M$ be a homomorphism. Then the following are equivalent;
\begin{enumerate}
\item $u$ is an $\mathcal X$-strongly pure monomorphism.
\item $v_N:N\rightarrow X(N)$ factors through $u$.
\item The composition $v_M\circ u:N\rightarrow X(M)$ is an $\mathcal X$-preenvelope.
\end{enumerate}
\end{lem}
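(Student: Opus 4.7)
The plan is to verify the three equivalences by short diagram chases, using only the defining factorization properties of the $\mathcal{X}$-envelopes $v_N$ and $v_M$ together with the definition of an $\mathcal{X}$-strongly pure monomorphism. No minimality (envelope versus preenvelope) will be needed, so the argument will in fact show these equivalences for any $\mathcal{X}$-preenvelopes. I would organize the proof as the cycle (1) $\Rightarrow$ (2) $\Rightarrow$ (1) together with (1) $\Leftrightarrow$ (3).

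For (1) $\Rightarrow$ (2), I apply the strongly pure property of $u$ to the specific morphism $v_N:N\rightarrow X(N)$, whose codomain lies in $\mathcal{X}$; this directly supplies a map $g:M\rightarrow X(N)$ with $g\circ u=v_N$. For (2) $\Rightarrow$ (1), given any $f:N\rightarrow X$ with $X\in\mathcal{X}$, the preenvelope property of $v_N$ yields $h:X(N)\rightarrow X$ with $h\circ v_N=f$; if $g:M\rightarrow X(N)$ satisfies $g\circ u=v_N$, then $h\circ g:M\rightarrow X$ is the required extension, since $(h\circ g)\circ u=h\circ v_N=f$.

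For (1) $\Rightarrow$ (3), take any $f:N\rightarrow X$ with $X\in\mathcal{X}$. Strong purity of $u$ gives $g:M\rightarrow X$ with $g\circ u=f$, and the preenvelope property of $v_M$ produces $h:X(M)\rightarrow X$ with $h\circ v_M=g$; hence $h\circ(v_M\circ u)=f$, so $v_M\circ u$ is an $\mathcal{X}$-preenvelope. Conversely, for (3) $\Rightarrow$ (1), if $v_M\circ u$ is a preenvelope and $f:N\rightarrow X$ is arbitrary with $X\in\mathcal{X}$, we obtain $h:X(M)\rightarrow X$ with $h\circ v_M\circ u=f$; then $g:=h\circ v_M$ extends $f$ through $u$.

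I do not expect any genuine obstacle here: each implication is a one-step factorization, and the statement is flagged as straightforward in the excerpt. The only thing worth being careful about is not to invoke the minimality clause in the definition of envelope (which would require $h$ or $g$ to be unique or an automorphism), since the argument only uses the preenvelope factorization property of $v_N$ and $v_M$.
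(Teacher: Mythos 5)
Your proof is correct: each implication is a one-step factorization exactly as you describe, and the paper itself omits the proof precisely because this is the expected straightforward argument. Your remark that only the preenvelope factorization property (not minimality) is used is a valid observation and matches the paper's implicit reasoning.
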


\noindent Observe that condition (2) implies that any $\mathcal
X$-strongly pure monomorphism is a monomorphism, as we are
assuming that every module has a monomorphic $\mathcal
X$-envelope.

A submodule  $N$ of $M$ will be called an {\em $\mathcal
X$-strongly pure submodule} if the inclusion map $i:N\rightarrow M$ is an
$\mathcal X$-strongly pure monomorphism.

 Given a right $R$-module $M$, we will denote by add[$M$] the class of all direct summands of finite direct sums of copies of $M$. And we will say that a module $M$ is {\em $\mathcal X$-strongly purely closed} if any direct limit of splitting monomorphisms among objects in add[$M$] is an $\mathcal X$-strongly pure monomorphism.

\begin{exam}\label{vnr}
Let us give some examples of $\mathcal X$-strongly purely closed modules in which we will be interested in along this paper.

\begin{enumerate}
\item Let $\mathcal X$ be the class of all injective modules. Then any module is $\mathcal X$-strongly purely closed.

\item Let $\mathcal X$ be the class of all pure-injective modules. Then any module is $\mathcal X$-strongly purely closed.

\item Let $(\mathcal F, \mathcal C)$ be a cotorsion pair cogenerated by a set (see \cite{GT}) and assume that $\mathcal F$ is closed under taking direct limits. Then it is known that every module has a monomorphic $\mathcal C$-envelope (see e.g. \cite{Xu}). It is easy to check that any object in $\mathcal F \cap \mathcal C$ is $\mathcal C$-strongly purely closed.
\end{enumerate}
\end{exam}

\noindent In the proposition below, we describe the endomorphism
ring of $\mathcal X$-strongly purely closed modules. Recall that a module $M$
is called \textit{cotorsion} if $\text{Ext}^1(F,M)=0$ for every flat module $F$. It was shown in \cite{GH} that if $M$
is a flat cotorsion right $R$-module and $S=\End(M_{R}),$ then
$S/J(S)$ is a von Neumann regular right self-injective ring and idempotents lift modulo $J(S)$.

\begin{prop}
Let $\mathcal X$ be a class of modules closed under isomorphisms
and assume any module has a monomorphic $\mathcal X$-envelope.
Then for any  $\mathcal X$-strongly purely closed module $X$, $\End(X)$ is a
right cotorsion ring.

In particular, $\End(X)/J(\End(X))$ is von
Neumann regular right self-injective and idempotents lift modulo
$J(\End(X))$.
\end{prop}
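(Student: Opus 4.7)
The plan is to verify that $S := \End(X)$ is a right cotorsion ring, i.e., that $\Ext^1_S(F,S)=0$ for every flat right $S$-module $F$; the ``in particular'' clause will then follow immediately by applying the cited result of \cite{GH} to $S$, viewed as a flat (even free) cotorsion right module over itself. Equivalently, I need to show that every short exact sequence $0 \to S \to E \to F \to 0$ in $\M$-$S$ with $F$ flat splits.

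Given such an extension, I would first present $F = \varinjlim F_\lambda$ as a filtered colimit of finitely generated free right $S$-modules using Lazard's theorem, and pull the extension back along each canonical map $F_\lambda \to F$ to obtain short exact sequences $0 \to S \to E_\lambda \to F_\lambda \to 0$. Since $F_\lambda$ is free, each of these splits as $E_\lambda \cong S \oplus F_\lambda$, and $E = \varinjlim E_\lambda$ by exactness of filtered colimits in $\M$-$S$. The crucial feature of the pullback construction is that the transition maps $E_\lambda \to E_\mu$ are upper-triangular with respect to these decompositions: the distinguished $S$-summand is preserved and the transitions act on it as the identity, even though the $F_\lambda$-splittings themselves cannot be chosen compatibly in $\lambda$.

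Tensoring with $X$ (exact on the subsystem because the $F_\lambda$ are flat), I obtain $E \otimes_S X = \varinjlim (E_\lambda \otimes_S X)$ with each term $E_\lambda \otimes_S X \cong X \oplus (F_\lambda \otimes_S X)$ lying in $\mathrm{add}[X]$. The termwise inclusions $X \hookrightarrow E_\lambda \otimes_S X$ are splitting monomorphisms in $\mathrm{add}[X]$ that are compatible with the transitions, thanks to the identity-on-$S$-summand property above, and their colimit is the canonical inclusion $X = S \otimes_S X \hookrightarrow E \otimes_S X$. The $\mathcal X$-strongly purely closed hypothesis then ensures that this colimit inclusion is an $\mathcal X$-strongly pure monomorphism.

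In the settings of Example~\ref{vnr} one has $X \in \mathcal X$, so that the identity $\mathrm{id}_X: X \to X$ extends along the strongly pure inclusion to some $\tilde r: E \otimes_S X \to X$. By the standard tensor-hom adjunction $\H_R(-\otimes_S X, X) \cong \H_S(-, \End_R(X)) = \H_S(-, S)$ (where $X$ carries its natural $(S,R)$-bimodule structure with $S$ acting by endomorphisms), this $\tilde r$ corresponds to an $S$-linear $r: E \to S$ whose restriction to $S \subseteq E$ is the identity, furnishing the desired splitting. The main delicate point will be the reading of ``direct limit of splitting monomorphisms in $\mathrm{add}[X]$'': I am applying the hypothesis to the compatible family of \emph{termwise} splitting inclusions $X \hookrightarrow E_\lambda \otimes_S X$, rather than requiring the transitions $E_\lambda \otimes_S X \to E_\mu \otimes_S X$ to themselves be splitting monomorphisms (which, in general, they are not); this is the natural reading in which the argument proceeds.
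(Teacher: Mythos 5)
Your proof is correct and mirrors the paper's argument step by step: pull the extension back along a filtered presentation of $F$ by finitely generated projectives (you use frees via Lazard, the paper uses projectives — no real difference), tensor with $X$ to obtain $u\otimes_S X$ as a direct limit of splitting monomorphisms in $\mathrm{add}[X]$, invoke the $\mathcal X$-strongly purely closed hypothesis to conclude that $u\otimes_S X$ is $\mathcal X$-strongly pure, retract $X\cong S\otimes_S X$ off $E\otimes_S X$, and return to $\M$-$S$ via tensor--hom adjunction (the paper phrases this last step as applying $\H_R(X,-)$ and using the natural transformation $\sigma$, which is an equivalent formulation of the same adjunction). Your explicit caveat that one needs $X\in\mathcal X$ in order to extend $\mathrm{id}_X$ along the $\mathcal X$-strongly pure inclusion is a genuine and well-spotted point: the paper uses this silently when it asserts the existence of $h$ with $h\circ(u\otimes X)=1_{S\otimes X}$, and the proposition as stated does not hypothesize $X\in\mathcal X$, so the argument as written really establishes the conclusion for $\mathcal X$-strongly purely closed modules that also lie in $\mathcal X$ — which is exactly the situation in all the intended applications (injectives, pure-injectives, flat cotorsion modules).
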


\begin{proof}
Let us call $S=\End(X)$. Take any short exact sequence
$0\rightarrow S_S \stackrel{u}{\rightarrow} L \rightarrow F
\rightarrow 0$ with $F$, a flat right $S$-module. As $F$ is flat, the sequence is pure and thus, the
induced sequence $0\rightarrow S_S \otimes X_R \rightarrow
L\otimes X_R \rightarrow F \otimes X_R \rightarrow 0$ is also pure
in $\M$-$R$. On the other hand, we know that $F$ is a direct limit
of a family of finitely generated projective modules. Say that
$F=\varinjlim P_i$. Let us denote by
$\delta_i:P_i\rightarrow F$ the canonical homomorphisms from $P_i$
to the direct limit. Taking pullbacks, we get the following
commutative diagrams

\bigskip
\[
\xymatrix{
0 \ar[r]^{} & S \ar[d]^{\cong} \ar[r]^{u_i} & L_i\ar[d]^{\varphi_i} \ar[r]^{} & P_i \ar[d]^{\delta_i} \ar[r]^{} & 0\\
0 \ar[r]^{} & S \ar[r]^{u} & L \ar[r]^{}  & F \ar[r]^{} & 0 }
\]

\bigskip

\noindent in which the upper row splits, since $P_i$ is projective.
Moreover, $L=\varinjlim L_i$.
 Applying now the functor
$-\otimes_{S} X$, we get the following commutative diagram in $\M$-$R$.

  \bigskip
\[
\xymatrix{
0 \ar[r]^{} & S\otimes_S X \ar[d]^{1_S \otimes X} \ar[r]^{u_i\otimes X} & L_i\otimes X\ar[d]^{\varphi_i \otimes X} \ar[r]^{} & P_i\otimes X \ar[d]^{\delta_i\otimes X} \ar[r]^{} & 0\\
0 \ar[r]^{} & S\otimes X \ar[r]^{u\otimes X} & L\otimes X \ar[r]^{}  & F\otimes X \ar[r]^{} & 0 }
\]

\bigskip

\noindent We have $L\otimes X=\varinjlim L_i\otimes X$ and
$F\otimes X=\varinjlim P_i\otimes X$, since $-\otimes_S X$
commutes with direct limits. Note that $S\otimes_S X\cong X$ and
$P_i \otimes X$ is isomorphic to a direct summand of a finite
direct sum of copies of $X$. This shows that $u\otimes X$ is a
direct limit of splitting monomorphisms among modules in $\text{add}[X]$
and, as we are assuming that $X$ is an $\mathcal
X$-strongly purely closed module, this means that $u\otimes X$ is an $\mathcal
X$-strongly pure monomorphism. So there exists an $h:L\otimes_S X\rightarrow
S\otimes X$ such that $h\circ (u\otimes X)=1_{S\otimes X}$.
Applying now the functor $\H_R(X, -)$, we get the following
diagram in $\M$-$S$,

 \bigskip
\[
\xymatrix{
S \ar[d]^{\sigma_S} \ar[r]^{u}  & L\ar[d]^{\sigma_L}\\
\H(X, S\otimes X) \ar[r]^{} & \H(X, L\otimes X) }
\]

\bigskip

\noindent in which $\sigma_S$ is an isomorphism, $\sigma_L\circ u=\H(X,
u\otimes X)\circ \sigma_S$, $\H(X, 1_{S\otimes X})\circ
\sigma_S=\sigma_S$ and $\H(X, h) \circ \H(X, u\otimes X)=\H(X,
1_{S\otimes X})$. Therefore, ${\sigma_S}^{-1} \circ \H(X, h) \circ
\sigma_L \circ u=1_S$ and this shows that $u$ splits. Thus, the short
exact sequence $0\rightarrow S_S\rightarrow ^{u} L \rightarrow F
\rightarrow 0$ splits and hence $\End(X)$ is a right cotorsion
ring. Finally, by \cite{GH},
$\End(X)/J(\End(X))$ is von Neumann regular right self-injective
and idempotents lift modulo $J(\End(X))$.
\end{proof}

\bigskip

\noindent We are now ready to prove our first theorem.

 \begin{theorem}
Let $X\in \mathcal X$ be an $\mathcal X$-strongly purely closed module and $Y \in \mathcal X$,
an $\mathcal X$-strongly pure submodule of $X$. If there exists
an $\mathcal X$-strongly pure monomorphism $u:X\rightarrow Y$, then $X\cong Y$.
 \end{theorem}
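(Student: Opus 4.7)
The plan is to translate the problem into Murray--von Neumann equivalence data for idempotents in $S=\End(X)$ and then invoke a Schr\"{o}der--Bernstein theorem for principal right ideals inside $\bar S=S/J(S)$, which by the preceding proposition is a right self-injective von Neumann regular ring.

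First I set up the idempotents. Since $u:X\to Y$ is $\mathcal X$-strongly pure with $X\in\mathcal X$, extending $1_X$ along $u$ produces a retraction $p:Y\to X$ with $pu=1_X$; similarly, the inclusion $i:Y\hookrightarrow X$ is strongly pure with $Y\in\mathcal X$, yielding $q:X\to Y$ with $qi=1_Y$. Setting $v=iu$ and $g=pq$ in $S$, and $e_0=vg=iupq$, $e_1=iq$, a direct calculation gives $gv=1$, $e_0^2=e_0$, $e_1^2=e_1$, $e_0X=u(X)$, $e_1X=Y$, and $e_0e_1=e_1e_0=e_0$, so that $e_0\le e_1\le 1$ in the idempotent lattice of $S$. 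The relations $vg=e_0$ and $gv=1$ together with $v\in e_0S$ and $g\in Se_0$ then exhibit $e_0$ as Murray--von Neumann equivalent to $1$ in $S$.

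Reducing modulo $J(S)$ gives $\bar e_0\le\bar e_1\le\bar 1$ with $\bar e_0\bar S\cong\bar S$ in $\bar S$. Because $\bar S$ is right self-injective, the principal right ideals $\bar e_1\bar S$ and $\bar S$ are injective right $\bar S$-modules, and the composite $\bar S\cong\bar e_0\bar S\hookrightarrow\bar e_1\bar S$ together with the inclusion $\bar e_1\bar S\hookrightarrow\bar S$ embeds each into the other. Schr\"{o}der--Bernstein for injective modules then yields $\bar e_1\bar S\cong\bar S$, i.e.\ the Murray--von Neumann equivalence $\bar e_1\sim\bar 1$ in $\bar S$. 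This is the heart of the proof and where I expect the main obstacle, since the conclusion rests essentially on the right self-injectivity of $\bar S$ supplied by the preceding proposition; without it, the two embeddings would not suffice to force an isomorphism.

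Finally I produce honest witnesses in $S$ and extract the isomorphism. Lifting witnesses of $\bar e_1\sim\bar 1$ to $x_0\in e_1S$, $y_0\in Se_1$, the element $y_0x_0$ reduces to $1$ modulo $J(S)$ and is therefore a unit of $S$; setting $x=x_0(y_0x_0)^{-1}$ and $y=y_0$ gives $yx=1$ in $S$, whence $xy$ is an idempotent lying in the corner ring $e_1Se_1=\End(Y)$. Its image in $\End(Y)/J(\End(Y))$ is the identity $\bar e_1$, and since in any ring an idempotent congruent to $1$ modulo the Jacobson radical must equal $1$, we get $xy=e_1$ in $\End(Y)$. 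Interpreting $x$ as a map $X\to e_1X=Y$ and $y|_Y$ as a map $Y\to X$, the relations $yx=1_X$ and $xy=e_1=1_{\End(Y)}$ make them mutually inverse, so $X\cong Y$.
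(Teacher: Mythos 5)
Your proof is correct, but it takes a genuinely different route from the one in the paper. You transfer the problem to the endomorphism ring $S=\End(X)$: the splittings coming from the two strongly pure monomorphisms give idempotents $e_0\le e_1\le 1$ with $e_0$ Murray--von Neumann equivalent to $1$; you then pass to $\bar S=S/J(S)$, which the preceding proposition identifies as a von Neumann regular right self-injective ring, apply Schr\"{o}der--Bernstein for injective $\bar S$-modules (Bumby's 1965 theorem, invoked over $\bar S$, not over $R$) to get $\bar e_1\sim\bar 1$, and lift back to $S$ to produce an explicit isomorphism $X\cong Y$. The paper instead runs a self-contained Eilenberg-swindle directly at the level of modules and envelopes: it writes $X=H\oplus Y$, forms $P=\bigoplus_{t\ge 0}u^t(H)$ so that $P\cap Y=u(P)$, uses the $\mathcal X$-strongly-purely-closed hypothesis to realize $Q\cong X(P\cap Y)$ as a direct summand of $Y$, and then matches the pieces $H\oplus Q$ and $Q$ via envelope minimality to produce $\psi\oplus 1_K:X\to Y$.

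The trade-offs are worth noting. Your argument is conceptually clean and displays the real strength of the proposition on $\End(X)/J(\End(X))$; the only subtle points (the self-injectivity of $\bar S$ driving the Schr\"{o}der--Bernstein step, and the idempotent-congruent-to-$1$ argument for $xy=e_1$) you handle correctly. However, your route relies on Bumby's original result as an external input and works only because both $X$ and $Y$ lie in $\mathcal X$, so that the relevant maps live in a single corner ring. The paper's direct module-level argument avoids any appeal to Bumby and, more importantly, serves as the template for the harder Theorem~\ref{endo}, where $M$ and $N$ are merely $\mathcal X$-endomorphism invariant (not in $\mathcal X$) and there is no single endomorphism ring in which to run your idempotent calculus. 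So the paper's proof is the one that scales; yours is a nice independent verification in the case at hand.
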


 \begin{proof}
As $Y\in \mathcal X$, $Y$ must be a
 direct summand of $X$. Thus, we can find a submodule $H$ of $X$ such
that $X=H\oplus Y$. Now $$X=H\oplus Y\supseteq H\oplus
u(X)=H\oplus u(H) \oplus u(Y) \supseteq \ldots$$ and thus, calling
$P=\oplus_{t=0}^{\infty} u^t(H)$, we get that $X\supseteq P$. By
construction, $P\cap Y=\oplus_{t=1}^{\infty} u^t(H)=u(P)$.

Let $v_{P\cap Y}:P\cap Y\rightarrow X(P\cap Y)$ be the $\mathcal
X$-envelope of $P\cap Y$ and call $w:P\cap Y\rightarrow Y$, the
inclusion. Note that $Y$ is an $\mathcal X$-strongly purely closed
module, since it is a direct summand of $X$. And, as $w$ is a
directed union of inclusions of direct summands of $Y$, it is an
$\mathcal X$-strongly pure monomorphism. This means that there
exists a $q:Y\rightarrow X(P\cap Y)$ such that $q\circ w=v_{P\cap
Y}$, since $X(P\cap Y)\in \mathcal X$. Similarly, as $Y \in
\mathcal X$ and $X(P\cap Y)$ is an $\mathcal X$-envelope there
exists an $h:X(P\cap Y)\rightarrow Y$ such that $h\circ v_{P\cap
Y}=w$.

In particular, $q\circ h \circ v_{P\cap Y}=v_{P\cap Y}$. As
$v_{P\cap Y}$ is an envelope, we deduce that $q\circ h$ is an
isomorphism. Therefore, $h$ is a splitting monomorphism and
$Q=\Im(h)$ is a direct summand of $Y$. So there exists a submodule
$K$ such that $Y=Q\oplus K$.

Now, $X=H\oplus Y=H\oplus (Q\oplus K)=(H\oplus Q)\oplus K$. Thus,
$H\oplus Q\in \mathcal X$. Moreover, the inclusion $i:P
\rightarrow H\oplus Q$ may be viewed as $i=(1_{H} \oplus v_{P\cap
Y}): P=H\oplus (P\cap Y)\rightarrow H\oplus X(P\cap Y)\cong
H\oplus Q$. So $i$ is an $\mathcal X$-strongly pure monomorphism.
Now, as $Q\in \mathcal X$, we deduce that there exists a
$\psi:H\oplus Q\rightarrow Q$ such that $\psi \circ i=h\circ
v_{P\cap Y} \circ u$. Thus $h^{-1} \circ \psi \circ i=v_{P\cap Y}
\circ u$. Note that $u:P\rightarrow P\cap Y$ and $h:X(P\cap
Y)\rightarrow Q$ are isomorphisms. By the same way, as $H\oplus
Q\in \mathcal X$, there exists a $\varphi: Q\rightarrow H\oplus Q$
such that $\varphi \circ h \circ v_{P\cap Y} \circ u=i$. This
gives us $\varphi\circ \psi \circ i=i$ and $\psi \circ \varphi
\circ h \circ v_{P\cap Y}=v_{P\cap Y}$. On the other hand, as
$v_{P\cap Y}: P\cap Y\rightarrow X(P\cap Y)$ is an $\mathcal
X$-envelope and $H\in \mathcal X$, we get that $i=(1_{H} \oplus
v_{P\cap Y}): P=H\oplus (P\cap Y)\rightarrow H\oplus Q$ is an
$\mathcal X$-envelope. As both $i$ and $v_{P\cap Y}$ are
envelopes, we deduce that $\varphi \circ \psi$ and $\psi \circ
\varphi \circ h$ (and thus, $\psi \circ \varphi$) are
automorphisms. Therefore, both $\varphi$ and $\psi$ are
isomorphisms. Finally, $\psi \oplus 1_{K}: X=(H\oplus Q) \oplus K
\rightarrow Q\oplus K=Y$ is the desired isomorphism. Thus, $X\cong
Y$.
 \end{proof}

Applying the above theorem to the particular cases of injective envelopes, pure-injective envelopes and cotorsion envelopes, we obtain the following.

\begin{cor} \label{envelopes} Let $E$ be a module.
\begin{enumerate}


\item $($Bumby, \cite{Bumby}$)$ If $E$ is an injective module and $E'$, an injective submodule of $E$ such that there exists a monomorphism $u:E\rightarrow E'$, then $E\cong E'$.


\item If $E$ is a pure-injective module and $E'$, a pure-injective pure submodule of $E$ such that there exists a pure monomorphism $u:E\rightarrow E'$, then $E\cong E'$.


\item If $E$ is a flat cotorsion module and $E'$, a pure submodule of $E$ such that $E'$ is also flat cotorsion and there exists a pure monomorphism $u:E\rightarrow E'$, then $E\cong E'$.
 \end{enumerate}
 \end{cor}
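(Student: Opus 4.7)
The plan is to deduce all three statements by specializing the Theorem above to the appropriate enveloping class $\mathcal X$ and verifying, in each case, that the hypotheses of $\mathcal X$-strongly pure closedness and $\mathcal X$-strongly pure monomorphisms are met.

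For part (1), I would take $\mathcal X$ to be the class of injective modules. By Example~\ref{vnr}(1), every module is $\mathcal X$-strongly purely closed, and because any homomorphism into an injective module extends along any monomorphism, an $\mathcal X$-strongly pure monomorphism is simply a monomorphism. Hence the inclusion $E' \hookrightarrow E$ and the monomorphism $u:E\to E'$ are both $\mathcal X$-strongly pure, and since $E, E' \in \mathcal X$, the Theorem yields $E \cong E'$.

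For part (2), I would take $\mathcal X$ to be the class of pure-injective modules. By Example~\ref{vnr}(2), every module is $\mathcal X$-strongly purely closed, and the standard characterization of pure monomorphisms as those through which any map to a pure-injective extends shows that the $\mathcal X$-strongly pure monomorphisms are precisely the pure monomorphisms. Since $E' \hookrightarrow E$ and $u:E\to E'$ are pure monomorphisms by hypothesis and $E, E' \in \mathcal X$, the Theorem applies.

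For part (3), I would take $(\mathcal F, \mathcal C)$ to be the flat cotorsion pair and set $\mathcal X = \mathcal C$. Since both $E$ and $E'$ are flat cotorsion, they lie in $\mathcal F \cap \mathcal C$, so by Example~\ref{vnr}(3) both are $\mathcal C$-strongly purely closed. The key observation here is that a pure monomorphism $v : A \to B$ between flat modules is automatically $\mathcal C$-strongly pure: the cokernel $B/A$ is flat, so $\Ext^1(B/A,C)=0$ for every cotorsion $C$, and therefore any homomorphism $A\to C$ extends through $v$. Applied to the pure inclusion $E' \hookrightarrow E$ and to the pure monomorphism $u:E\to E'$, this produces the required $\mathcal C$-strongly pure monomorphisms, and the Theorem yields $E \cong E'$.

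The main obstacle is the verification in (3), where one must combine the flatness of the quotient of a pure extension of flats with the $\Ext$-vanishing definition of cotorsion modules to derive $\mathcal C$-strong purity from the weaker hypothesis of pure monomorphism. Parts (1) and (2) are essentially direct instantiations of the Theorem, once the enveloping class is chosen.
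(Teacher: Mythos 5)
Your proposal is correct and follows exactly the route the paper takes: instantiate the preceding theorem at $\mathcal X$ equal to the class of injective, pure-injective, or cotorsion modules and invoke Example~\ref{vnr} for the strongly-purely-closed hypothesis. The paper's proof is a one-line appeal to the theorem and the example; you have simply made explicit the verifications it leaves implicit, in particular the observation in part (3) that a pure monomorphism with flat domain and codomain has flat cokernel and hence is $\mathcal C$-strongly pure by $\operatorname{Ext}$-vanishing against cotorsion modules.
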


\begin{proof}
The above theorem applies to the cases of injective, pure-injective and flat cotorsion modules in view of Example~\ref{vnr}.
\end{proof}

 The following lemma will be used in our next theorem.

\begin{lem}
A direct summand of an $\mathcal
X$-endomorphism invariant module is also $\mathcal X$-endomorphism
invariant.
\end{lem}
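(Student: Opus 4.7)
The plan is to construct, for each endomorphism $\varphi$ of $X(N)$, a corresponding endomorphism $\tilde\varphi$ of $X(M)$, apply the $\mathcal X$-endomorphism invariance of $M$ to pull it back to an endomorphism $g$ of $M$, and then use the direct summand structure to extract an endomorphism $f$ of $N$ with $v_N\circ f=\varphi\circ v_N$. Since every module is assumed to have a monomorphic $\mathcal X$-envelope, $N$ already has one, which we denote $v_N:N\to X(N)$.

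The first step is to link $X(N)$ with $X(M)$. Writing $M=N\oplus N'$ with inclusion $\iota:N\to M$ and projection $\pi:M\to N$ (so $\pi\circ\iota=1_N$), the preenvelope property of $v_N$ applied to $v_M\circ\iota:N\to X(M)$ yields $\alpha:X(N)\to X(M)$ with $\alpha\circ v_N=v_M\circ\iota$, and symmetrically the preenvelope property of $v_M$ applied to $v_N\circ\pi:M\to X(N)$ yields $\beta:X(M)\to X(N)$ with $\beta\circ v_M=v_N\circ\pi$. The identity $\beta\circ\alpha\circ v_N=\beta\circ v_M\circ\iota=v_N\circ\pi\circ\iota=v_N$ combined with the minimality of the envelope $v_N$ forces $\theta:=\beta\circ\alpha$ to be an automorphism of $X(N)$.

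Given $\varphi\in\End(X(N))$, I would set $\tilde\varphi:=\alpha\circ\theta^{-1}\circ\varphi\circ\theta^{-1}\circ\beta\in\End(X(M))$. By $\mathcal X$-endomorphism invariance of $M$ there is $g\in\End(M)$ with $v_M\circ g=\tilde\varphi\circ v_M$. Taking $f:=\pi\circ g\circ\iota$, the verification $v_N\circ f=\varphi\circ v_N$ follows from the chain of substitutions $v_N\circ\pi=\beta\circ v_M$, $v_M\circ g=\tilde\varphi\circ v_M$, $v_M\circ\iota=\alpha\circ v_N$, and finally $\beta\circ\alpha=\theta$, which makes the two $\theta^{-1}$ factors cancel on either side of $\varphi$.

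The one subtlety, and the step requiring the only piece of real thought, is the choice of $\tilde\varphi$. The naive lift $\alpha\circ\varphi\circ\beta$ produces $v_N\circ f=\theta\circ\varphi\circ\theta\circ v_N$ rather than $\varphi\circ v_N$, because $\alpha$ and $\beta$ do not form an honest section/retraction pair at the envelope level: they satisfy only $\beta\circ\alpha=\theta$ for some automorphism $\theta$ of $X(N)$, which need not be the identity. Inserting $\theta^{-1}$ on each side of $\varphi$ inside $\tilde\varphi$ is precisely what absorbs this failure.
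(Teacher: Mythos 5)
Your proof is correct, and the overall strategy matches the paper's: lift an endomorphism of $X(N)$ to one of $X(M)$, invoke the $\mathcal X$-endomorphism invariance of $M$ to obtain $g\in\End(M)$, then cut down to $f=\pi\circ g\circ\iota\in\End(N)$ and check $v_N\circ f=\varphi\circ v_N$. The one real difference is in how you lift. The paper writes $M=N\oplus K$ and then asserts $X(M)=X(N)\oplus X(K)$, using the canonical inclusion $\iota_{X(N)}$ and projection $\pi_{X(N)}$ as the lifting maps; this tacitly relies on $\mathcal X$ being closed under finite direct sums and on envelopes being compatible with that decomposition (true in the paper's motivating examples, but not part of the stated hypotheses on $\mathcal X$). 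You avoid that assertion entirely: $\alpha$ and $\beta$ are obtained purely from the preenvelope universal properties of $v_N$ and $v_M$, which are available by fiat, and the failure of $\alpha,\beta$ to be an honest section/retraction pair is absorbed by the automorphism $\theta=\beta\circ\alpha$ and its two inverse insertions. In effect the paper's argument is the special case $\theta=1_{X(N)}$ of yours, so your version is marginally more self-contained while buying no extra generality in the cases the paper actually uses. One small point you should still make explicit: the existence of $\beta$ uses that $v_M$ is a preenvelope and $X(N)\in\mathcal X$, while the existence of $\alpha$ uses that $v_N$ is a preenvelope and $X(M)\in\mathcal X$; you gesture at this but it is worth spelling out since those are the only structural facts the argument needs.
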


\begin{proof}
Let $M$ be an $\mathcal X$-endomorphism invariant
module and $N$, a direct summand of $M$. So there exists a module
$K$ such that $M=N\oplus K$. Thus, $X(M)=X(N)\oplus X(K)$. Let
$f:X(N)\rightarrow X(N)$ be an endomorphism of $X(N)$. So
$\iota_{X(N)}\circ f\circ \pi_{X(N)}$ is an endomorphism of $X(M)$,
where $\iota_{X(N)}:X(N)\rightarrow X(M)$ is the inclusion and
$\pi_{X(N)}:X(M)\rightarrow X(N)$ is the canonical projection. We clearly have $v_N\circ \pi_N=
\pi_{X(N)}\circ v_M$ and $v_M\circ \iota_N= \iota_{X(N)}\circ v_N$
with $\iota_N:N\rightarrow M$, the inclusion and
$\pi_N:M\rightarrow N$, the canonical projection. As $M$ is
$\mathcal X$-endomorphism invariant, there exists $h:M\rightarrow
M$ such that $v_M\circ h=\iota_{X(N)}\circ f\circ \pi_{X(N)}\circ
v_M$. We deduce that $g=\pi_N\circ h\circ \iota_N:N\rightarrow N$
is an endomorphism of $N$ such that $v_N\circ g=f\circ v_N$. So
$N$ is an $\mathcal X$-endomorphism invariant module.
\end{proof}

Our next theorem addresses the Schr\"{o}der-Bernstein problem for modules invariant under endomorphisms of their general envelopes.

\begin{theorem}\label{endo}
Let $\mathcal X$ be an enveloping class and $M, N$, two $\mathcal X$-endomorphism invariant modules
with monomorphic
$\mathcal X$-envelopes $v_M:M\rightarrow X(M)$ and $v_N:N\rightarrow X(N)$, respectively. Assume that $N$ is $\mathcal X$-strongly purely closed and $M$ is an $\mathcal X$-strongly pure submodule of $N$. If there exists an $\mathcal X$-strongly pure monomorphism
$u:N\rightarrow M$, then $M\cong N$.
\end{theorem}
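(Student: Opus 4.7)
The approach is to adapt the proof of the preceding theorem via the envelope machinery. I would first lift the strongly pure monomorphisms to the $\mathcal X$-envelopes and apply the preceding theorem there; then use the endomorphism invariance of $M$ and $N$ to descend to a module-level isomorphism.

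\emph{Step 1 (envelope-level setup).} Applying Lemma~2.1 to both $i:M\hookrightarrow N$ and $u:N\to M$, the compositions $v_N\circ i$ and $v_M\circ u$ are $\mathcal X$-preenvelopes of $M$ and $N$, respectively. Combining the preenvelope property with the universal property of the envelopes $v_M, v_N$ produces morphisms $\alpha:X(M)\to X(N)$, $\beta:X(N)\to X(M)$, $\alpha':X(N)\to X(M)$ and $\beta':X(M)\to X(N)$ with $\alpha\circ v_M=v_N\circ i$, $\beta\circ v_N\circ i=v_M$, $\alpha'\circ v_N=v_M\circ u$ and $\beta'\circ v_M\circ u=v_N$. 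Envelope minimality forces $\beta\circ\alpha$ and $\beta'\circ\alpha'$ to be automorphisms, so $\alpha$ and $\alpha'$ are split monomorphisms, and hence $X(M)$ and $X(N)$ are direct summands of each other.

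\emph{Step 2 (envelope isomorphism).} The split mono $\alpha$ realises $X(M)\in\mathcal X$ as an $\mathcal X$-strongly pure submodule of $X(N)\in\mathcal X$, while $\alpha':X(N)\to X(M)$ is a strongly pure monomorphism. The preceding theorem applied to the pair $(X(N),X(M))$ then yields $X(M)\cong X(N)$, provided $X(N)$ is $\mathcal X$-strongly purely closed. In the three settings of Example~2.2 this is automatic, and in the general case one checks that the closedness hypothesis on $N$ transfers to $X(N)$ via the envelope embedding.

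\emph{Step 3 (descent to $M\cong N$, the main obstacle).} The envelope isomorphism alone is not enough, since two different $\mathcal X$-endomorphism invariant modules can share the same envelope (for instance, distinct fully invariant submodules of an indecomposable injective module). The idea is to adapt the concluding construction of the preceding theorem to the present setting: the decomposition $X(N)=(H\oplus Q)\oplus K$ and the envelope maps $\varphi,\psi$ built from envelope minimality are carried out at the envelope level as before, but at each step the endomorphism invariance of $N$ is used to descend idempotent splittings of $X(N)$ to direct sum decompositions of $N$ itself, and analogously for $M$. The iterated construction $P=\bigoplus_{t}\alpha'^{\,t}(H)$ can be arranged so that the envelope maps $\varphi,\psi$ restrict to morphisms between $M$ and $N$, and envelope minimality then forces these restrictions to be mutually inverse isomorphisms, giving $M\cong N$. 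Verifying that each step of the argument of the preceding theorem really does pull back through $v_M$ and $v_N$ via the invariance hypotheses is the technical heart of the proof.
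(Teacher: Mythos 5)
Your proposal identifies the right difficulty (that the envelope-level argument must be forced down to the modules themselves) but it does not resolve it, and the route you choose has an unjustified intermediate step. Two concrete problems:

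\textbf{Step~2 is not justified and is not needed.} You invoke the preceding theorem on the pair $(X(N),X(M))$, which requires $X(N)$ to be $\mathcal X$-strongly purely closed. The hypothesis is that $N$, not $X(N)$, is $\mathcal X$-strongly purely closed, and ``one checks that the closedness hypothesis on $N$ transfers to $X(N)$'' is an assertion, not an argument: the definition involves direct limits of splitting monomorphisms inside $\mathrm{add}[X(N)]$, and there is no obvious mechanism for transporting the corresponding property of $\mathrm{add}[N]$ along the envelope embedding. The paper's proof never establishes $X(M)\cong X(N)$ as a separate step and never invokes the earlier theorem; it carries out a Bumby-style decomposition directly on $M$ and $N$.

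\textbf{The descent (Steps~1 and 3) is the whole proof, and you have not done it.} The first and decisive use of endomorphism invariance must appear at once, not be deferred. From your Step~1 you only conclude that $X(M)$ and $X(N)$ are direct summands of each other. What is actually needed is that the strongly pure monomorphism $w':M\to N$ itself splits: one takes a left inverse $g$ of $f_2\circ f_1$ on $X(M)$, uses $\mathcal X$-endomorphism invariance of $M$ to pull $g$ back to $\delta:M\to M$ with $v_M\circ\delta=g\circ v_M$, and computes $v_M\circ\delta\circ u\circ w'=v_M$ to see that $\delta\circ u\circ w'$ is an automorphism, hence $w'$ splits and $N=H\oplus M$. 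Only then is the iterated sum $P=\bigoplus_{i\ge0}u^i(H)$ formed \emph{inside $N$} (not at the envelope level as in your $P=\bigoplus_t\alpha'^{\,t}(H)$). The second essential descent, which your sketch does not touch, is this: after building $h:X(P\cap M)\to X(M)$ and $p:X(M)\to X(P\cap M)$ with $p\circ h=1$, one uses endomorphism invariance of $M$ again to get $(h\circ p)(M)\subseteq M$, so that $h$ restricts to a \emph{module-level} splitting monomorphism $h|_{p(M)}:p(M)\to M$, and $Q=h(p(M))$ is a genuine direct summand of $M$, $M=Q\oplus K$. Writing $N=(H\oplus Q)\oplus K$ and showing $\psi:H\oplus Q\to Q$ is an isomorphism (using that $i:P\to H\oplus Q$ is itself an $\mathcal X$-envelope) then gives $\psi\oplus 1_K:N\to M$. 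Your proposal concedes that ``verifying that each step \ldots pulls back \ldots is the technical heart of the proof''; precisely so, and without supplying those verifications the argument is an outline, not a proof.
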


\begin{proof}
Let $w'$ be an $\mathcal X$-strongly pure monomorphism from $M$ to
$N$. As $v_M:M\rightarrow X(M)$ is an $\mathcal X$-envelope and
$v_N\circ w':M\rightarrow X(N)$ is an $\mathcal X$-preenvelope,
there exists a split monomorphism $f_1:X(M)\rightarrow X(N)$ such
that $f_1\circ v_M=v_N\circ w'$. Similar argument shows that there
exists a split monomorphism $f_2:X(N)\rightarrow X(M)$ such that
$f_2\circ v_N=v_M\circ u$. Since the composition $f_2\circ
f_1:X(M)\rightarrow X(M)$ is also a split monomorphism, there
exists an endomorphism $g:X(M)\rightarrow X(M)$ such that
$g\circ(f_2\circ f_1)=1_{X(M)}$. Moreover, as $M$ is $\mathcal
X$-endomorphism invariant, there exists a homomorphism
$\delta:M\rightarrow M$ such that $v_M\circ \delta=g\circ v_M$.
This gives us, $v_M\circ \delta\circ u\circ w'= g\circ v_M\circ u
\circ w'=g\circ f_2\circ v_N\circ w'=g\circ f_2 \circ f_1\circ
v_M=v_M$. Since $v_M$ is a monomorphism, $\delta\circ u\circ w'$
is an automorphism. Therefore, $w'$ is a splitting monomorphism
and this yields that $M$ is a direct summand of $N$. Thus, we can
find a submodule $H$ of $N$ such that $N=H\oplus M$. Now,
$$N=H\oplus
M\supseteq H\oplus u(N)=H\oplus u(H)\oplus u(M)\supseteq
\ldots\supseteq \oplus_{i=0}^{n}u^{i}(H)\oplus u^{n}(M)\supseteq
\ldots$$

Call
$P=\oplus_{i=0}^{\infty}u^{i}(H)=H\oplus(\oplus_{i=1}^{\infty}u^{i}(H))=H\oplus(P\cap
M)\subseteq N$. By construction, $u(P)=P\cap M$. Let $v_{P\cap
M}:P\cap M\rightarrow X(P\cap M)$ be an $\mathcal X$-envelope of
$P\cap M$ and $w:P\cap M\rightarrow M$ be the inclusion. As $w$ is
a directed union of inclusions of direct summands of $M$, it is an
$\mathcal X$-strongly pure monomorphism. As $v_{P\cap M}$ is an
$\mathcal X$-envelope, there exists a homomorphism $h:X(P\cap
M)\rightarrow X(M)$ such that $h\circ v_{P\cap M}=v_{M}\circ w$.
And as $w$ is an $\mathcal X$-strongly pure monomorphism, there
exists a homomorphism $p:X(M)\rightarrow X(P\cap M)$ such that
$p\circ v_M\circ w=v_{P\cap M}$. In particular, $p\circ h \circ
v_{P\cap M}=v_{P\cap M}$ and since $v_{P\cap M}$ is an $\mathcal
X$-envelope, $p\circ h=1_{X(P\cap M)}$. On the other hand, $h\circ
p$ is an endomorphism of $X(M)$. As $M$ is $\mathcal
X$-endomorphism invariant, $(h\circ p)(M)\subseteq M$. This means
that, $h|_{p(M)}$ is a homomorphism from $p(M)$ to $M$.

Now we proceed to show that, $v_{p(M)}:p(M)\rightarrow X(P\cap M)$
is an $\mathcal X$-envelope and $p(M)$ is $\mathcal
X$-endomorphism invariant. Let $X'\in \mathcal X$ and
$f:p(M)\rightarrow X'$ be a homomorphism. As $v_M$ is an $\mathcal
X$-envelope and $X'\in \mathcal X$, there exists a homomorphism
$\alpha:X(M)\rightarrow X'$ such that $\alpha\circ v_M=f\circ
p|_M$.  Note that, $v_M\circ h|_{p(M)}=h\circ v_{P(M)}$ and
$p\circ v_M=v_{P(M)}\circ p|_M$, by the definitions of the
homomorphisms. Therefore, we have $\alpha\circ h: X(P\cap
M)\rightarrow X'$ with $(\alpha\circ h)\circ v_{p(M)}=f$. So we
deduce that $v_{p(M)}:p(M)\rightarrow X(P\cap M)$ is an $\mathcal
X$-preenvelope. Moreover, it can be shown that
$v_{p(M)}:p(M)\rightarrow X(P\cap M)$ is indeed an $\mathcal
X$-envelope. Now, let $\varphi:X(P\cap M)\rightarrow X(P\cap M)$
be an endomorphism. As $h\circ \varphi\circ p$ is an endomorphism
of $X(M)$ and $M$ is $\mathcal X$-endomorphism invariant, $(h\circ
\varphi\circ p)(M)\subseteq M$. So we have $\varphi(p(M))\subseteq
p(M)$. Thus, $p(M)$ is $\mathcal X$-endomorphism invariant.

Furthermore, we have $v_{p(M)}=p\circ h\circ v_{p(M)}=p\circ
v_M\circ h|_{p(M)}= v_{p(M)}\circ p|_M \circ h|_{p(M)}$ and, as
$v_{p(M)}$ is a monomorphism, we get that $p|_M \circ
h|_{p(M)}=1_{p(M)}$. Therefore, $h|_{p(M)}:p(M)\rightarrow M$ is a
splitting monomorphism and $Q=Im(h|_{p(M)})=h\circ p(M)$ is a
direct summand of $M$. So there exists a module $K$ such that
$M=Q\oplus K$. Again, $N=H\oplus M=H\oplus (Q\oplus K)=(H\oplus
Q)\oplus K$ and thus $H\oplus Q$ is an $\mathcal X$-endomorphism
invariant module.

Moreover, the inclusion $i:P \rightarrow H\oplus Q$ may be viewed
as $i:=(1_{H} \oplus (p|_M\circ w)): P=H\oplus (P\cap
M)\rightarrow H\oplus p(M)\cong H\oplus Q$. So $i$ is an $\mathcal
X$-strongly pure monomorphism. As $H\oplus Q$ is $\mathcal
X$-endomorphism invariant, there exists a $\psi:H\oplus
Q\rightarrow Q$ such that $\psi \circ i=h|_{p(M)}\circ p|_M\circ
w\circ u|_P$, where $h|_{p(M)}:p(M)\rightarrow Q$ and $u|_P:P\rightarrow P\cap M$ are
isomorphisms. On the other hand, as $i$ is an $\mathcal
X$-strongly pure monomorphism and $H\oplus Q$ is an $\mathcal
X$-endomorphism invariant module, we get that $i: P=H\oplus (P\cap
M)\rightarrow H\oplus Q$ is an $\mathcal X$-envelope. Similarly,
there exists a homomorphism $\varphi: Q\rightarrow H\oplus Q$ such
that $\varphi \circ h|_{p(M)} \circ p|_M \circ w \circ u|_P=i$.
This means that $\varphi\circ \psi \circ i=i$ and
$h|_{p(M)}^{-1}\circ \psi \circ \varphi \circ h|_{p(M)} \circ
v_{P\cap M}=v_{P\cap M}$. And, as both $i$ and $v_{P\cap M}$ are
envelopes, we deduce that $\varphi \circ \psi$ and
$h|_{p(M)}^{-1}\circ \psi \circ \varphi \circ h|_{p(M)}$ are
automorphisms. Thus, it follows that $\psi \circ \varphi$ is also
an automorphism. Therefore, both $\varphi$ and $\psi$ are
isomorphisms. Finally, $\psi \oplus 1_{K}: N=(H\oplus Q) \oplus K
\rightarrow Q\oplus K=M$ is the desired isomorphism. This
completes the proof.
\end{proof}

Applying the above theorem to the particular cases of injective envelopes, pure-injective envelopes and cotorsion envelopes, we get the following.

\begin{cor} Let $M$ and $N$ be two modules.
\begin{enumerate}
\item $($Bumby, \cite{Bumby}$)$ If $M$ and $N$ are quasi-injective modules such that there is a monomorphism from $M$ to $N$ and a monomorphism from $N$ to $M$, then $M\cong N$.
\item If $M$ and $N$ are pure-quasi-injective modules such that there is a pure monomorphism from $M$ to $N$ and a pure monomorphism from $N$ to $M$, then $M\cong N$.
\item If $M$ and $N$ are flat modules invariant under endomorphisms of their cotorsion envelopes such that there is a pure monomorphism from $M$ to $N$ and a pure monomorphism from $N$ to $M$, then $M\cong N$.
\end{enumerate}
\end{cor}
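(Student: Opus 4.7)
My plan is to deduce each of the three items from Theorem~\ref{endo} by specializing the enveloping class $\mathcal{X}$: for (1), $\mathcal{X}$ is the class of injective modules; for (2), the class of pure-injective modules; for (3), the cotorsion class $\mathcal{C}$ of the flat cotorsion pair $(\mathcal{F},\mathcal{C})$, which is cogenerated by a set with $\mathcal{F}$ closed under direct limits. In each setting monomorphic $\mathcal{X}$-envelopes exist (the injective envelope, the pure-injective envelope, and the cotorsion envelope of \cite{Xu} respectively), and the $\mathcal{X}$-endomorphism invariance condition reduces respectively to quasi-injectivity, pure-quasi-injectivity, and invariance under endomorphisms of the cotorsion envelope, matching exactly the hypotheses on $M$ and $N$ in each item.

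The substantive translation is from the abstract hypothesis on $\mathcal{X}$-strongly pure monomorphisms to the concrete (pure) monomorphisms named in the corollary. For (1), any monomorphism is $\mathcal{X}$-strongly pure because homomorphisms into injective modules extend along arbitrary monomorphisms. For (2), the defining property of pure-injectivity is precisely that homomorphisms into pure-injectives extend along pure monomorphisms, so pure monomorphisms coincide with $\mathcal{X}$-strongly pure ones. For (3), a pure monomorphism between flat modules has flat cokernel, so $\Ext^{1}$ into any cotorsion module vanishes on this cokernel and every map $N\to X$ with $X\in\mathcal{C}$ extends to $M$; hence pure monomorphisms between flats are $\mathcal{C}$-strongly pure. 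Parts (1) and (2) of Example~\ref{vnr} furthermore give at once that $N$ is $\mathcal{X}$-strongly purely closed in those two cases.

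The main obstacle is verifying that $N$ is $\mathcal{C}$-strongly purely closed in case (3). By Example~\ref{vnr}(3), this requires $N\in\mathcal{F}\cap\mathcal{C}$, so one must show that a flat module invariant under the endomorphisms of its cotorsion envelope is itself cotorsion. The approach is to exploit the exact sequence $0\to N\to C(N)\to C(N)/N\to 0$, whose cokernel is flat by construction of the cotorsion envelope in the flat cotorsion pair, and to use the endomorphism invariance of $N$ together with the envelope property of $v_N$ (in particular that $\End(C(N))$ is a cotorsion ring by the earlier Proposition, since $C(N)$ is flat cotorsion and hence $\mathcal{C}$-strongly purely closed) to produce a retraction of $v_N$, realizing $N$ as a direct summand of $C(N)$ and thereby placing it in $\mathcal{F}\cap\mathcal{C}$. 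Once this is in place, Theorem~\ref{endo} applies directly in all three cases and yields the desired isomorphism $M\cong N$.
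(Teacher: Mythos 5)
Your overall strategy is exactly the intended one: the paper offers no separate argument for this corollary beyond ``apply Theorem~\ref{endo}'' with $\mathcal X$ the class of injective, pure-injective, and cotorsion modules respectively, and your verifications for items (1) and (2), as well as your observation that a pure monomorphism between flat modules is $\mathcal C$-strongly pure (because its cokernel is flat, so $\mathrm{Ext}^1$ into any cotorsion module vanishes), are all correct.

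There is, however, a genuine error in your treatment of the ``$\mathcal C$-strongly purely closed'' hypothesis in item (3). The lemma you propose --- that a flat module invariant under endomorphisms of its cotorsion envelope must itself be cotorsion, i.e.\ a direct summand of $C(N)$ --- is false, and the retraction you hope to build from endomorphism invariance does not exist in general: invariance only gives, for each endomorphism $\varphi$ of $C(N)$, a map $f:N\to N$ with $v_N\circ f=\varphi\circ v_N$; it never produces a splitting of $v_N$ (compare the injective case, where quasi-injective modules are the fully invariant submodules of their injective envelopes and are certainly not all injective). For a concrete counterexample, take a von Neumann regular ring $R$ that is not a $V$-ring and a simple non-injective right $R$-module $S$: every $R$-module is flat, so cotorsion coincides with injective over $R$; thus $S$ is flat and quasi-injective (hence invariant under endomorphisms of its cotorsion $=$ injective envelope) but not cotorsion. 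Fortunately the detour is unnecessary: flatness of $N$ alone already makes $N$ $\mathcal C$-strongly purely closed. A direct limit of splitting monomorphisms among objects of $\mathrm{add}[N]$ yields a short exact sequence whose cokernel is a direct limit of objects of $\mathrm{add}[N]\subseteq\mathcal F$, hence lies in $\mathcal F$ since $\mathcal F$ is closed under direct limits; therefore $\mathrm{Ext}^1$ of that cokernel into any $X\in\mathcal C$ vanishes and the limit monomorphism is $\mathcal C$-strongly pure. This is the same computation as in Example~\ref{vnr}(3); membership of $N$ in $\mathcal C$ is simply never used there. With that repair, Theorem~\ref{endo} applies in all three cases as you intend.
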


\section{Schr\"{o}der-Bernstein problem for Automorphism-invariant modules}

\noindent Although we do not know if the results from previous section can be extended to modules invariant under automorphisms of their envelopes in the general case, we will study this question  for the particular case of injective and pure-injective envelopes in this section. Recently, it has been shown in \cite{AFT} that if $M$ and $N$ are
automorphism-invariant modules of finite Goldie dimension such
that there is a monomorphism from $M$ to $N$ and a monomorphism
from $N$ to $M$, then $M \cong N$. We will extend this result and
show that the Schr\"{o}der-Bernstein problem has a positive solution for any automorphism-invariant module.

We will denote the injective envelope of a module $M$ by $E(M)$ and $A\subseteq_{e} B$ will mean that $A$ is an essential submodule of $B$.
We can now prove the main result of this section.

\begin{theorem}\label{auto}
Let $M, N$ be automorphism-invariant modules and let $f:M\rightarrow N$ and $g:N\rightarrow M$ be monomorphisms. Then $M\cong N$.
\end{theorem}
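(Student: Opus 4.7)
\emph{Plan.} The strategy is to adapt the proof of Theorem~\ref{endo} to the weaker automorphism-invariant setting, using Bumby's Corollary~\ref{envelopes}(1) to place $M$ and $N$ inside a common injective envelope.

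First, apply Corollary~\ref{envelopes}(1) to the monomorphisms $f$ and $g$ to conclude $E(M)\cong E(N)$; fixing such an isomorphism, identify both envelopes with a single injective module $E$, so that $M$ and $N$ appear as essential submodules of $E$. By automorphism-invariance, every $\alpha\in\mathrm{Aut}(E)$ satisfies $\alpha(M)=M$ and $\alpha(N)=N$. Next, extend $f$ and $g$ by injectivity of $E$ to $\hat f,\hat g\in\End(E)$; since $\ker\hat f\cap M=0$ and $M\subseteq_{e} E$, these extensions are injective endomorphisms of an injective module, and therefore split monomorphisms whose images are direct summands of $E$ isomorphic to $E$.

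The plan is then to mimic the telescoping skeleton of Theorem~\ref{endo}: first show that $f$ is a splitting monomorphism so that $N=H\oplus f(M)$ for some $H\subseteq N$; then form $P=\bigoplus_{i\ge 0}\hat g^i(H)\subseteq N$ satisfying $\hat g(P)=P\cap M$; and finally use the envelope of $P\cap M$, which sits as a direct summand of $E$, to construct analogues of the homomorphisms $\psi,\varphi$ from Theorem~\ref{endo} that yield the desired isomorphism $M\cong N$.

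The essential obstacle, and the only genuine difference from Theorem~\ref{endo}, is that each of these construction steps invoked endomorphism-invariance to transport an endomorphism of $E$ to an endomorphism of $M$ or $N$, whereas here we have only automorphism-invariance at our disposal. The plan is to show that each of the maps of $E$ required along the way can be chosen to be, or replaced (after composition with a suitable element of $\mathrm{Aut}(E)$) by, a genuine automorphism of $E$; automorphism-invariance then suffices to carry out the lifting, and the rest of the argument proceeds as in Theorem~\ref{endo}. For this one expects to exploit that $\End(E)$ is right self-injective, so that $\End(E)/J(\End(E))$ is von Neumann regular right self-injective, and to use the abundance of automorphisms of $E$ coming from its direct-sum structure to upgrade the relevant split monomorphisms to two-sided units. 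Arranging these upgrades coherently throughout the telescoping argument will be the hardest step; when genuine obstructions persist, one anticipates invoking the structural decomposition of an automorphism-invariant module into quasi-injective and square-free summands, applying Theorem~\ref{endo} to the quasi-injective parts and handling the square-free parts by a separate uniqueness-of-essential-submodules argument.
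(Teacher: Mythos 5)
Your proposal takes a fundamentally different route from the paper, and unfortunately it remains a plan rather than a proof: the two steps you flag as ``the hardest'' are precisely where the argument would have to live, and neither is carried out.

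The paper's actual proof of Theorem~\ref{auto} makes no use of the telescoping/infinite direct sum construction from Theorem~\ref{endo} at all. It proceeds as follows. First, it shows directly that $f$ and $g$ are split monomorphisms, not by the $X(M)$-lifting argument of Theorem~\ref{endo}, but by citing a separate result from \cite{ESS} and \cite{preprint}: any monic endomorphism of an automorphism-invariant module splits. (Note that the lifting argument of Theorem~\ref{endo} cannot be ``upgraded'' as you propose: there one must lift to $M$ a retraction $g$ of $X(M)$ satisfying $g\circ(f_2\circ f_1)=1_{X(M)}$, and $g$ is an automorphism of $X(M)$ if and only if $f_2\circ f_1$ already is — i.e., precisely if what we are trying to prove holds. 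Automorphism-invariance gives no purchase at that step.) With $f(M)$ a direct summand of $N$ and $g(N)$ a direct summand of $M$, the paper then fixes an isomorphism $h:E(g(N))\rightarrow E(f(M))$ and forms $M'=h^{-1}(f(M))\cap g(N)$ and $N'=h(g(N))\cap f(M)$. These are essential submodules of $g(N)$ and $f(M)$ respectively, and $h$ restricts to an isomorphism $M'\cong N'$. Automorphism-invariance is used to extend the monomorphisms $u_{N'}\circ h|_{M'}:M'\rightarrow f(M)$ and $u_{M'}\circ h^{-1}|_{N'}:N'\rightarrow g(N)$ along the essential embeddings to maps $\psi:g(N)\rightarrow f(M)$ and $\varphi:f(M)\rightarrow g(N)$. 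Finally, $(1_{g(N)}-\varphi\circ\psi)\circ u_{M'}=0$ forces $1_{g(N)}-\varphi\circ\psi$ to have essential kernel, hence to lie in $J(\End(g(N)))$ — a key property of automorphism-invariant modules — so $\varphi\circ\psi$ is an automorphism, and likewise $\psi\circ\varphi$; thus $f(M)\cong g(N)$ and $M\cong N$.

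Your fallback idea (decompose $M=Q\oplus S$ into quasi-injective and square-free parts, handle $Q$ via Theorem~\ref{endo}) is a reasonable thing to try, but it too is left as a sketch and faces real obstructions: the given monomorphisms $f,g$ need not respect the decompositions of $M$ and $N$, and the Schr\"oder--Bernstein problem for the square-free summands would still need its own argument — which is essentially the content the paper supplies directly, without ever decomposing. The ingredients you would need to close either route are the two facts the paper actually leans on (monic endomorphisms of automorphism-invariant modules split; maps with essential kernel lie in the Jacobson radical of the endomorphism ring of an automorphism-invariant module), and neither appears in your plan.
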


\begin{proof}
By Corollary \ref{envelopes}, we know that $E(M)\cong E(N)$. On the other hand, we have a diagram

\bigskip
\[
\xymatrix{
M\ar[d]^{1_M}  \ar[r]^{f} &f(M) \ar[r]^{u} & N \ar[r]^{g} & M\\
M }
\]

\bigskip
\noindent in which $u:f(M) \rightarrow N$ is the inclusion. As $M$
is automorphism-invariant and $g\circ u\circ f$ is monic, there
exists a $\varphi:M\rightarrow M$ such that $\varphi \circ g \circ
u\circ f=1_M$ (see \cite{ESS} and \cite{preprint}). And, as $f:M\rightarrow f(M)$ is an isomorphism,
this means that $u:f(M) \rightarrow N$ splits and thus, $f(M)$ is
a direct summand of $N$. Similarly, $g(N)$ is a direct summand of
$M$.

As $f:M\rightarrow f(M)$ and $g:N\rightarrow g(N)$ are
isomorphisms, we know that $E(f(M))\cong E(g(N))$. We proceed to
show that $f(M) \cong g(N)$. Let $h:E(g(N))\rightarrow E(f(M))$ be
an isomorphism. Call $M'=h^{-1}(f(M))\cap g(N)$ and $N'=h(g(N))
\cap f(M)$. By construction $h|_{M'}:M'\rightarrow N'$ is an
isomorphism. Moreover, as $g(N)\subseteq_{e} E(g(N))$, we have that
$h(g(N))\subseteq_{e} E(f(M))$. Similarly, $h^{-1}(f(M)) \subseteq_{e}
E(g(N))$. Therefore, $M'\subseteq_{e} E(g(N))$ and $N'\subseteq_{e} E(f(M))$.
In particular, $M'\subseteq_{e} g(N)$ and $N'\subseteq_{e} f(M)$. We have then

\bigskip
\[
\xymatrix{
M'\ar[d]^{u_{M'}}  \ar[r]^{h|_{M'}} & N' \ar[r]^{u_{N'}} & f(M)\\
g(N) }
\]

\bigskip

\noindent where $u_{M'}$ and $u_{N'}$ are inclusions. Moreover,
$g(N)$ is a submodule of $M$ and $f(M)$ is isomorphic to $M$.
Therefore, $f(M)$ is automorphism-invariant and as, $u_{N'} \circ
h|_{M'}$ is monic, there exists a $\psi:g(N) \rightarrow f(M)$
such that $\psi \circ u_{M'}=u_{N'} \circ h|_{M'}$. Similarly,
there exists a $\varphi:f(M) \rightarrow g(N)$ such that $\varphi
\circ u_{N'}=u_{M'} \circ h^{-1}|_{N'}$. Composing, we get the
diagram

\bigskip
\[
\xymatrix{
M'\ar[d]^{u_{M'}}  \ar[r]^{h|_{M'}} & N' \ar[d]^{u_{N'}} \ar[r]^{h^{-1}|_{N'}} & M'\ar[d]^{u_{M'}}\\
g(N) \ar[r]^{\psi} & f(M) \ar[r]^{\varphi} & g(N) }
\]

\bigskip

\noindent So $\varphi \circ \psi \circ u_{M'}= \varphi \circ
u_{N'} \circ h|_{M'}=u_{M'} \circ h^{-1}|_{N'} \circ
h|_{M'}=u_{M'}$. And this means that $(1_{g(N)} - \varphi \circ
\psi) \circ u_{M'} =0$. As $u_{M'}$ is monic, we deduce that
$(1_{g(N)} - \varphi \circ \psi)$ has essential kernel and thus,
$(1_{g(N)} - \varphi \circ \psi) \in J(\End(g(N))$ since $g(N)$ is
automorphism-invariant. Therefore, $\varphi \circ \psi$ is an
isomorphism. Similarly, $\psi \circ \varphi$ is an isomorphism and
thus, $\varphi: f(M) \rightarrow g(N)$ is an isomorphism. As
$M\cong f(M)$ and $N\cong g(N)$, we deduce that $M\cong N$.
\end{proof}

\noindent Let us finish this paper by extending the above result to modules which are invariant under automorphisms of their pure-injective envelope. For that, recall that there exists a full embedding $H:{\rm Mod-}R \rightarrow \mathcal{D}$ of ${\rm Mod-}R$ into a locally finitely presented Grothendieck category $\mathcal D$ (normally
called the functor category of ${\rm Mod}$-$R$) satisfying the following key properties (see e.g. \cite{CB,Si}):

\begin{itemize}
\item $H$ has a right adjoint functor $G:\mathcal{D} \rightarrow {\rm Mod}$-$R$.

\item An exact sequence
$$\Sigma\equiv 0 \rightarrow X\rightarrow Y\rightarrow Z\rightarrow 0$$
in ${\rm Mod}$-$R$ is pure if and only if the induced sequence $H(\Sigma)$
is exact (and pure) in $\mathcal D$.

\item $H$ identifies ${\rm Mod}$-$R$ with the full subcategory of $\mathcal D$ consisting of the all
 FP-injective objects in $\mathcal D$ where an object $D\in \mathcal D$ is FP-injective if ${\rm Ext}^1(D',D)=0$ for every finitely presented object $D'\in\mathcal D$.

\item A module $M\in{\rm Mod}$-$R$ is pure-injective if and only if $H(M)$ is an injective object of $\mathcal D$. And $u:M\rightarrow PE(M)$ is the pure-injective envelope of $M$ if and only if $H(u):H(M)\rightarrow H(PE(M))$ is the injective envelope of $H(M)$ in $\mathcal D$.

\end{itemize}

\noindent On the other hand, the locally finitely presented Grothendieck category $\mathcal D$ is equivalent to the category of unitary right $S$-modules for a ring with enough idempotents $S$
(see e.g. \cite[52.5(2)]{W}). Recall that a non-unital ring $R$ is said to have enough idempotents if there exists a set of orthogonal idempotents $\{e_i\}_{i\in I}$  in the ring such that $R=\oplus_I e_iR=\oplus_{i\in I}Re_i$.
And a right $R$-module $M$ is called unitary if $MR=M$. We refer to \cite[Section 49]{W} for the categorical properties of these unitary modules.

It is easy to check that all the proofs in this paper work for unitary right modules over a ring with enough idempotents.  Therefore, identifying $\mathcal D$ with ${\rm Mod-}S$, we may apply Theorem~\ref{auto} to ${\rm Mod-}S$ to get:

\begin{cor} \label{p-auto}
Let $M, N$ be two modules invariant under automorphisms of their pure-injective envelopes let $f:M\rightarrow N$ and $g:N\rightarrow M$ be pure monomorphisms. Then $M\cong N$.
\end{cor}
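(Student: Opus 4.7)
The plan is to transfer the statement, via the full embedding $H:\text{Mod-}R\to \mathcal D\simeq\text{Mod-}S$, to the setting of injective envelopes inside $\mathcal D$, where Theorem~\ref{auto} already applies (as the authors have observed, the proofs of the preceding sections all go through for unitary modules over rings with enough idempotents).

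First, I would push $f$ and $g$ through $H$. Because $f:M\to N$ and $g:N\to M$ are pure monomorphisms, they fit into pure exact sequences, so the characterization of purity recalled in the excerpt shows that $H(f)$ and $H(g)$ are ordinary monomorphisms in $\mathcal D$. Next, I need to verify that $H(M)$ and $H(N)$ are automorphism-invariant in $\mathcal D$ in the classical sense, i.e., each is invariant under automorphisms of its injective envelope. The injective envelope of $H(M)$ in $\mathcal D$ is $H(PE(M))$, and similarly for $N$. Given any automorphism $\varphi:H(PE(M))\to H(PE(M))$, fullness of $H$ yields a morphism $\bar\varphi:PE(M)\to PE(M)$ with $H(\bar\varphi)=\varphi$; choosing a preimage of $\varphi^{-1}$ and invoking faithfulness shows that $\bar\varphi$ must itself be an automorphism. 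The pure-automorphism-invariance hypothesis on $M$ then produces an endomorphism $\alpha:M\to M$ whose extension along the pure-injective envelope agrees with $\bar\varphi$, and applying $H$ gives that $H(M)$ is invariant under $\varphi$. The same argument works for $N$.

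With these preparations in place, Theorem~\ref{auto} applied inside $\text{Mod-}S$ to the automorphism-invariant modules $H(M), H(N)$ and the monomorphisms $H(f), H(g)$ produces an isomorphism $\Theta:H(M)\to H(N)$ in $\mathcal D$. Since $H$ is a full embedding, $\Theta=H(\theta)$ for a unique morphism $\theta:M\to N$, and faithfulness of $H$ (applied to both $\theta$ and to the preimage of $\Theta^{-1}$) forces $\theta$ to be an isomorphism. Thus $M\cong N$.

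The only step requiring genuine care is the transport of the automorphism-invariance hypothesis across $H$; this reduces to the observation that full faithfulness of $H$ induces a bijection between the automorphism groups of $PE(M)$ and $H(PE(M))$, so the pure version of the invariance hypothesis on $M$ is precisely what is needed to obtain the classical version for $H(M)$ inside $\mathcal D$. Once this translation is in hand, the rest is a direct invocation of Theorem~\ref{auto} in a slightly generalized ambient category.
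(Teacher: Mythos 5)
Your proposal is correct and follows essentially the same route as the paper: transfer the data through the full embedding $H$ into the functor category, observe that $H(M),H(N)$ are automorphism-invariant (injective envelope $H(PE(M))$), apply Theorem~\ref{auto}, and pull the resulting isomorphism back to $\mathrm{Mod}\text{-}R$. You simply spell out the translation of the automorphism-invariance hypothesis and the backward transport in more detail than the paper does; the argument and its key steps are the same.
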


\begin{proof}
In this case, $H(M), H(N)$ are automorphism-invariant objects in $\mathcal D$ and $H(f):H(M)\rightarrow H(N)$ and $H(g):H(N)\rightarrow H(M)$ are monomorphisms. So $H(M)\cong H(N)$ by Theorem~\ref{auto}. And this means that $M\cong G\circ H(M)$ is isomorphic to $N\cong G\circ H(N)$.
\end{proof}

\begin{remark}
The above results suggest that it might be possible to extend Theorem~\ref{endo} in last section to  $\mathcal X$-automorphism invariant modules for which the endomorphism ring of their $\mathcal X$-envelope is right cotorsion; for instance, to flat modules which are invariant under automorphisms of their cotorsion envelopes. However, our techniques do not seem to work in this more general setting. 
\end{remark}

In regard to this possible extension, our next example shows that we cannot expect to deduce this kind of result from Theorem \ref{auto}, Corollary \ref{p-auto} or results in Section 2. Our example shows that there exist flat modules which are invariant under automorphisms of their cotorsion envelopes but they are not invariant under endomorphisms of their cotorsion envelopes, nor under automorphisms of their injective or pure-injective envelopes and therefore, our results cannot be applied to these modules.

\begin{exam} \rm
Let $K$ be a field of characteristic zero and $S$, the $K$-algebra constructed in \cite[Section 2]{Z}. Then $S$ is a right artinian ring which is not right pure-injective. As $S_S$ is artinian, any right $S$-module is cotorsion and thus, is invariant under automorphisms of its cotorsion envelope. Assume that any direct sum of copies of $S_S$ is invariant under automorphisms of its pure-injective envelope. As ${\rm char}(K)=0$, this means that any direct sum of copies of $S_S$ is also invariant under endomorphisms of its pure-injective envelope $($see \cite{GKS}$)$ and thus, $H(S_S)$ is $\Sigma$-quasi-injective in the functor category $\mathcal D$. But then, $E(H(S_S))$ is $\Sigma$-injective $($see \cite{Faith}$)$ and this means that the pure-injective envelope of $S_S$ is $\Sigma$-pure-injective. Therefore, $S_S$ is also $\Sigma$-pure injective as it is a pure submodule of its pure-injective envelope, a contradiction. Thus we conclude that there exists an index set $I$ such that $S_S^{(I)}$ is not invariant under automorphisms of its pure-injective envelope. Call $M_S=S_S^{(I)}$.

Let now $R$ be the ring of all eventually constant sequences over $\mathbb F_2$, the field of two elements. It is known that $R$ is a von Neumann regular ring, and $R_R$ is an automorphism-invariant module which is not quasi-injective $($see \cite{GKS}$)$. Therefore, it cannot be invariant under endomorphisms of its cotorsion envelope, nor of its pure-injective envelope, either.

Let us consider the ring $R\times S$ and the right $R\times S$-module $R\times M$. Then:

\begin{enumerate}
\item $R\times M$ is flat and it is invariant under automorphisms of its cotorsion envelope, since so are $R_R$ and $M_R$.

\item $R\times M$ is not invariant under endomorphisms of its cotorsion envelope, since otherwise so would be $R_R$.

\item $R\times M$ is not invariant under automorphisms of its pure-injective  envelope, since otherwise so would be $M_R$.

\item $R\times M$ is not invariant under automorphisms of its injective envelope, since otherwise it would be quasi-injective, as $S$ is an algebra over a field of characteristic zero. And this would mean that $S_S$ would be injective, since it is a direct summand of $M_S$.
\end{enumerate}
\end{exam}

\bigskip 

\noindent {\bf Acknowledgment.} The work of the third author is partially supported by a grant from Simons Foundation (grant number 426367). Part of this work was done when the first and the third authors were visiting Harish-Chandra Research Institute, Allahabad, India. They would like to thank the institute and Dr. Punita Batra for warm hospitality. 

\bigskip

\end{document}